\theoremstyle{plain}
\newtheorem{theorem}{Theorem}[section]
\newtheorem{lemma}[theorem]{Lemma}
\newtheorem{proposition}[theorem]{Proposition}
\theoremstyle{definition}
\newtheorem{example}{\it Example\/}
\newtheorem*{const}{\it Construction\/}
\def\real{\mathbb{R}}
\def\D{\mathcal{D}}
\def\E{\mathcal{E}}
\def\FF{\mathcal{F}}
\def\LL{\mathcal{L}}
\begin{document}
\title{De~Rham cohomology \\ of diffeological spaces and foliations\footnote{\MEC}}

\author {G.~Hector  \and E.~Mac\'{\i}as-Virg\'os \and E.~Sanmart\'{\i}n-Carb\'on}

\def\MEC{Partially supported by FEDER and Research Project MTM2008-05861 MICINN Spain.}

\date{}
\maketitle
%
%
\begin{abstract}
Let $(M,\FF)$ be a foliated manifold. We prove that there is a canonical isomorphism between the complex of base-like forms $\Omega^*_b(M,\FF)$ of the foliation and the ``De Rham complex" of the space of leaves  $M/\FF$ when considered as a ``diffeological" quotient. Consequently, the two corresponding cohomology groups $H^*_b(M,\FF)$ and $H^*(M/\FF)$ are isomorphic.

As an application we give a quick proof for the topological invariance of the base-like cohomology of Lie foliations (and Riemannian foliations) on compact manifolds.\\

\noindent{\em 2000 Mathematics Subject Classification:} 57R30, 58B99

\noindent{\em Keywords:} diffeological space, foliation, base-like cohomology
\end{abstract}

\section{Introduction}
Diffeological spaces were introduced by J.-M.~Souriau in \cite{SOURIAU}
(and in a slightly different form by K.-T. Chen in
\cite{CHEN})
as a generalization of the notion of manifold. Spaces of maps and quotients of manifolds fit naturally into this category. Moreover, many
definitions from differential geometry can be extended to this setting. In particular, due to their contravariant nature, ``differential forms" and ``De Rham cohomology groups" extend to diffeological spaces in a canonical way
(for a  complete description of the De Rham calculus on diffeological spaces see
P.~Iglesias' book
\cite{IGLESIAS}).

In the context of Foliation Theory, the most interesting examples of diffeological spaces are the spaces of leaves $M/\FF$ of foliated manifolds, endowed with the quotient diffeology induced by the smooth structure of the ambient manifold (see section \ref{DIFFEOLOGY} for a precise definition).
It is then a natural problem to compare its De~Rham cohomology $H^*(M/\FF)$ with the base-like cohomology $H^*_b(M,\FF)$ which is a well known algebraic invariant of the foliation $\FF$ (see for example \cite{HECTOR,REINHART}).

In this paper we prove that the two theories coincide for any foliation $\FF$  (see Theorem \ref{MAIN}). As an application we obtain (Theorem \ref{APLI}) a simple proof of the topological invariance of the base-like cohomology for Lie foliations on compact manifolds. This result, which extends by standard techniques of algebraic topology to the larger family of Riemannian foliations, was first proved by A. El Kacimi and M. Nicolau in \cite{NICOLAUKACIMI}.

We thank F.~Alcalde, J.~A. \'Alvarez and R.~Wolak for several useful comments and remarks.

\section{Diffeological spaces}\label{DIFFEOLOGY}

In this first section we introduce some basic notions concerning diffeological spaces and their cohomology groups (see \cite{IGLESIAS}).

 \subsection{Diffeologies of class $C^r$}
 Let $X$ be a set. A map $\xymatrix{\real^n \supset U \ar[r]^\alpha &  X}$ defined on an open subset $U$ of some euclidean space will be called a {\em $n$-parametrization} of $X$. Now a {\em diffeology of class $C^r$}  on $X$ is a family $\D^r$ of parametrizations satisfying the following axioms:
\begin{enumerate}
\item[(1)] any constant parametrization of any dimension $n\geq 0$ belongs to $\D^r$,
\item[(2)] let $\xymatrix{\real^n \supset U \ar[r]^\alpha &  X}$ be a parametrization of $X$; if there exists an open cover $\{V_j\}$ of $U$ such that the restriction $\alpha_j$ of $\alpha$ to $V_j$ belongs to $\mathcal{D}^r$ for any $j$, then $\alpha$ belongs also to $\mathcal{D}^r$,
\item[(3)] for $\alpha \in \D^r$ and any $C^r$-map $\xymatrix{ \real^m \supset V \ar[r]^h & U}$, the composition $\alpha \circ h \in \D^r$.

\end{enumerate}
\noindent A set $X$ endowed with a diffeology $\D^r$ will be called a {\em diffeological space of class $C^r$}, any parametrization $\alpha \in \D^r$ being a {\em plot} of $X$.

\bigskip
\bigskip
The following additional definitions and observations will be relevant for the description of the category of diffeological spaces.

(a) It will be often convenient to define a diffeology on $X$ by means of a \textit{generating set}: for any set $\mathcal{G}$ of parametrizations of $X$ and any integer $r$, there exists a minimal diffeology of class $C^r$ containing $\mathcal{G}$. It will be called the \textit{$C^r$-diffeology generated by $\mathcal{G}$}.

(b) Let $f\colon X \to Y$ be a map from a diffeological space $(X, \D)$ to a set $Y$. The set $f(\mathcal{D})$ of parametrizations of type $f \circ \alpha$, where $\alpha \in \mathcal{D}$, generates a diffeology called \textit{the direct image of $\mathcal{D}$ by $f$}.

(c) Now we define the morphisms in the category of diffeological spaces: given two diffeological spaces $(X, \D^r)$ and $(Y , \E^r)$ of class $C^r$, a map $f\colon  X \to Y$ is a \textit{diffeological map of class $C^r$} (sometimes called \textit{differentiable map} in the literature) if $f(\mathcal{D}^r) \subset \mathcal{E}^r$.

(d) Finally notice that a diffeology of class $C^r$ generates a diffeology of class $C^s$ for any $s \leq r$. Thus we can speak about diffeological maps of class $C^s$ for spaces of class $C^r$.

\medskip
Indeed we will be mostly interested in diffeologies
of class $C^\infty$ also called \textit{smooth diffeologies} and diffeologies of class  $C^0$ also called \textit{topological diffeologies}.

\begin{example}\label{exple.manifold}{\it Manifold diffeologies.}\\
An atlas $\mathcal{V}$ of class $C^r$ on a manifold $M$ generates a $C^r$-diffeology which depends only on the $C^r$-structure defined by $\mathcal{V}$. It is a \textit{manifold diffeology} and consists of all $C^r$-parametrizations of $M$.
If $M$ and $N$ are two manifolds endowed with the corresponding
$C^r$-diffeologies, a map $f\colon M \to N$ is
$C^r$-diffeological if and only if it is
$C^r$-differentiable in the usual sense. In case $r=0$, such a map is just a continuous map.
\end{example}

\begin{example}\label{exple.quotient}{\it Quotient diffeologies.}\\
{Let $\pi\colon X \to X/\rho$ be the quotient map of a set $X$ by an equivalence relation $\rho$. If $\mathcal{D}$ is a diffeology on $X$, the direct image $\pi(\mathcal{D})$ generates a diffeology on $X/\rho$ which is called the \textit{quotient diffeology} associated to the relation $\rho$. One can define it as being the weakest diffeology on $X/\rho$ which makes $\pi$ a diffeological map.

If $(X,\rho)$ and $(X',\rho')$ are two spaces equipped with equivalence relations, a diffeological map $f\colon  (X,\rho) \to (X',\rho')$ which is compatible with $\rho$ and $\rho'$, induces a map $\bar f$ defining a commutative diagram :
$$
\xymatrix{X \ar[d] \ar[r]^f  &  X'\ar[d] \\
          X/\rho \ar[r]^{\bar f}  &  X'/\rho'}
$$
and $\bar f$ is diffeological with respect to the quotient diffeologies.

\medskip
There are two special cases of interest:

\smallskip
(a) If $\rho$ is the equivalence relation generated by a smooth action of a countable group $\Gamma$ on a smooth manifold $M$, then  $M/\rho$ is a smooth diffeological space with respect to the corresponding quotient diffeology, the latter being a manifold diffeology if $M/\rho$ is a manifold.

We will consider in particular the quotient of a connected Lie group $G$ by a dense countable subgroup $\Gamma$ which we will call a \textit{strongly homogeneous quotient.}

\smallskip
(b) If the classes of $\rho$ are the leaves of a smooth foliation $\FF$ on a manifold $M$, the quotient is the \textit{leaf-space} of $\FF$ denoted by $M/\FF$. It is again a smooth diffeological space.}
\end{example}

\subsection{De~Rham cohomology of smooth diffeological spaces}

Here we consider a smooth diffeological space $(X, \D)$. Let $\omega_\alpha$ be a $r$-form on the domain $U$ of the plot $\alpha$. A family  ${\omega }=\{{\omega }_\alpha\}_{\alpha \in
\D}$ indexed by the plots $\alpha \in \D$, is a \textit{De~Rham $r$-form} on $X$ if it fulfills the compatibility condition:
$$
{\omega }_{\alpha \circ h}=h^*{\omega }_\alpha
$$
for any smooth map $h\colon \real^m \supset V \to U$. The exterior differential $d \omega$ of $\omega$ is defined by $d \omega= \{d\omega_\alpha\}$ and verifies obviously the usual property $d \circ d  = 0$.\\

The differential complex $\Omega^*(X,\mathcal{D})$ of all De~Rham forms of any degree on $(X,\mathcal{D})$ is called the \textit{De~Rham complex} of the diffeological space $(X, \mathcal{D})$; its cohomology is the \textit{De~Rham cohomology group} $H^*(X,\mathcal{D})$ of $(X, \mathcal{D})$.

Any smooth diffeological map $f\colon  (X,\mathcal{D}) \to (Y,\mathcal{E})$ induces a homomorphism of differential complexes
$$
f^*\colon \Omega^*(X,\mathcal{D}) \leftarrow  \Omega^*(Y,\mathcal{E})
$$
by the formula $ (f^*{\omega })_\alpha ={\omega }_{f\circ \alpha}$,  $\alpha\in\D$. It goes over to cohomology with the usual functorial properties.

\bigskip
Now a smooth manifold $M$ is naturally equipped with two De~Rham cohomology groups: the usual one and the diffeological one as defined above. Fortunately, these two theories coincide as we show next. Indeed let $\Omega_{DR}(M)$ be the usual De~Rham complex of $M$, then the \textit{tautological map}
$$
\tau\colon \Omega_{DR}^*(M) \to \Omega^*(M,\mathcal{D})
$$
defined by $\tau({\eta}) =\{\alpha^*{\eta}\}_{\alpha \in \D}$ for any form $\eta \in \Omega_{DR}^*(M)$ is a morphism of differential complexes. Moreover we have

\begin{theorem} \label{VARIEDAD}- For any smooth manifold $M$, the tautological map $\tau$ is an isomorphism of differential complexes thus inducing an isomorphism
$$
\tau^* \colon H^*_{DR}(M) \to H^*(M,\mathcal{D}).
$$
\end{theorem}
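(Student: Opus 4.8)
The plan is to prove that $\tau$ is injective and surjective; that it is a morphism of differential complexes was already observed. The one structural fact I would lean on throughout is the content of Example~\ref{exple.manifold}: the manifold diffeology $\D$ of $M$ consists of \emph{all} smooth parametrizations of $M$. In particular, for any chart $(V,\varphi)$ with $\varphi\colon V\to U\subset\real^n$, $n=\dim M$, the inverse $\varphi^{-1}\colon U\to M$ is a plot, as is every restriction of it. Injectivity is then immediate: if $\tau(\eta)=0$ then $(\varphi^{-1})^*\eta=0$ for every chart, and since $\varphi^{-1}$ is a diffeomorphism onto $V$ this forces $\eta|_V=0$; as the chart domains cover $M$ we get $\eta=0$.

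For surjectivity I would build an explicit preimage of a given De~Rham form $\omega=\{\omega_\alpha\}_{\alpha\in\D}$. On the domain $V$ of each chart $\varphi$, set $\eta_V:=\varphi^*\omega_{\varphi^{-1}}$, which is a genuine smooth form on $V$ since $\omega_{\varphi^{-1}}$ is a smooth form on $U$ and $\varphi$ is smooth. First I would check that these local forms glue. On an overlap $V_1\cap V_2$ the transition map $h=\varphi_2\circ\varphi_1^{-1}$ satisfies $\varphi_1^{-1}=\varphi_2^{-1}\circ h$, so the compatibility condition applied to the plot $\varphi_1^{-1}$ gives $\omega_{\varphi_1^{-1}}=h^*\omega_{\varphi_2^{-1}}$; pulling back along $\varphi_1$ yields $\eta_{V_1}=(h\circ\varphi_1)^*\omega_{\varphi_2^{-1}}=\varphi_2^*\omega_{\varphi_2^{-1}}=\eta_{V_2}$ on $V_1\cap V_2$. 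Hence there is a well-defined global smooth form $\eta\in\Omega_{DR}^*(M)$ with $\eta|_V=\eta_V$ for all charts, and no partition of unity is needed since the pieces literally agree on overlaps.

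The hard part, and the step I expect to be the main obstacle, is to verify that $\tau(\eta)=\omega$, i.e.\ that $\alpha^*\eta=\omega_\alpha$ for \emph{every} plot $\alpha$ and not only for the chart plots from which $\eta$ was manufactured. The point is that an arbitrary plot is locally of that special shape. Given $\alpha\colon W\to M$ and $w\in W$, choose a chart $(V,\varphi)$ with $\alpha(w)\in V$ and an open $W'\ni w$ with $\alpha(W')\subset V$; by axiom (3) the restriction $\alpha|_{W'}$ is again a plot, and $\alpha|_{W'}=\varphi^{-1}\circ g$ with $g:=\varphi\circ\alpha|_{W'}\colon W'\to U$ smooth. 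On one hand, compatibility gives $\omega_\alpha|_{W'}=\omega_{\varphi^{-1}\circ g}=g^*\omega_{\varphi^{-1}}$. On the other hand $(\varphi^{-1})^*(\eta|_V)=(\varphi^{-1})^*\varphi^*\omega_{\varphi^{-1}}=\omega_{\varphi^{-1}}$, whence $\alpha^*\eta|_{W'}=g^*(\varphi^{-1})^*(\eta|_V)=g^*\omega_{\varphi^{-1}}$. So $\alpha^*\eta$ and $\omega_\alpha$ are smooth forms on $W$ agreeing near every point, hence equal, which proves $\tau(\eta)=\omega$ and finishes surjectivity. Apart from this local reduction, everything else is routine gluing of local differential forms; the real content is that the single relation $\omega_{\alpha\circ h}=h^*\omega_\alpha$, combined with the description of $\D$ as the family of all smooth parametrizations, propagates the values of $\omega$ from chart plots to all plots.
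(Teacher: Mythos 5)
Your proof is correct and follows essentially the same route as the paper: both construct the inverse of $\tau$ by pulling back the components of a diffeological form along chart plots $\varphi^{-1}$ and gluing via the compatibility condition. You simply spell out the two verifications the paper leaves implicit (agreement on overlaps, and that the resulting form satisfies $\alpha^*\eta=\omega_\alpha$ for \emph{all} plots by factoring an arbitrary plot locally through a chart), which is exactly the content behind the paper's claim that a diffeological form is ``completely determined'' by its chart components.
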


\begin{proof}- Recall that the manifold diffeology of $M$ is generated by any locally finite smooth atlas $\{(V_j, \varphi_j)\}$ of $M$. Then if $\dim(M) = n$, a diffeological form $w \in \Omega^*(M,\mathcal{D})$ is completely determined by the  set of its components $w_j \in \Omega^*(U_j)$ on the open sets $U_j = \varphi_j(V_j) \subset \mathbb{R}^n$. By the compatibility condition of diffeological forms, the local forms $\eta_j = \varphi_j^*(w_j) \in \Omega^*(V_j)$ coincide on the overlaps of the sets $V_j$ thus define a global usual De Rham form $\eta$ on $M$. Setting $\eta = \sigma(\omega)$, we define a map
$$
\sigma\colon \Omega^*(M,\mathcal{D}) \to \Omega^*_{DR}(M)
$$
which obviously commutes with the differential and is an inverse for $\tau$.
\end{proof}

\bigskip
Note that this result has also been proved by P.~Iglesias in \cite{IGLESIAS}.\\
From now on we identify the two complexes at hand by means of the tautological map $\tau$ and denote by $\Omega^*(M)$ and $H^*(M)$ the De Rham complex and cohomology of the manifold $M$ whether in the usual or in the diffeological sense. Furthermore, we will denote by $\Omega^*(X)$ and $H^*(X)$ the De Rham complex and cohomology of any diffeological space $X$ without mention of the diffeology when there is no ambiguity.

\section{Main Theorem}

\subsection{Base-like cohomology of foliations}

Let $(M,\FF)$ be a foliated manifold. A differential form $\omega \in \Omega^r(M)$ is {\em base-like} for $\FF$ if for any vector
field $X$ tangent to $\FF$, we have
$$i_X\omega=0\quad\mathrm{and}\quad i_X d\omega=0. $$
In particular, the Lie derivative $\LL_X\omega$ of $\omega$ with respect to such a vector field $X$ vanishes and $\omega$ is preserved by the flow generated by $X$. The differential  of a base-like form is evidently base-like and we denote by
$$
\Omega^*_b (M,\FF) \subset \Omega^*(M)
$$
the subcomplex of base-like forms. Its cohomology $H^*_b(M,\FF)$ is the {\em base-like cohomology} of $(M,\FF)$ \cite{REINHART}. Our goal is to relate these complex and cohomology with the De Rham complex and cohomology of the diffeological quotient $M/\FF$.

\medskip
To do so, it will be convenient to introduce first a more appropriate description of base-like forms. An open subset $V \subset \mathbb{R}^n$ is an \textit{open cube (of dimension $m$)} if its closure $\bar V$ is homeomorphic to $[0,1]^m$.

\begin{const}-  Recall that any codimension $m$ foliation $(M,\mathcal{F})$ of class $C^r$ on a manifold $M$ of dimension $n$, can be defined by a \textit{foliated cocycle}
$$\mathcal{C} = (\{(V_i,f_i)\},  \{g_{ij}\})$$
with values in the pseudo-group $\mathcal{P}^m$ of local $C^r$-diffeomorphisms of $\mathbb{R}^m$ verifying the following:

i) the underlying covering $\mathcal{V}  =\{V_i\}_{i \in I}$ is a locally finite covering by open cubes,

ii) any $f_i\colon  V_i \to \mathbb{R}^m$ is a submersion over an open cube $Q_i$ all of whose fibers are open cubes of dimension $n-m$ called \textit{plaques},

iii) when $V_i \cap V_j \neq \emptyset$, the cocycle $\{g_{ij}\}$ determines a family of local $C^r$-diffeomorphisms $g_{ij}\colon  Q_j \to Q_i$ which generates the \textit{holonomy pseudo-group $\mathcal{H}$ of $\mathcal{F}$} acting on $Q = \coprod_i~ Q_i$.

\noindent Indeed we represent concretely each $Q_i$ as a local transverse cube to $\mathcal{F}$ cutting each plaque of $V_i$ in exactly one point and such that $Q_i \cap Q_j = \emptyset$ for $i \neq j$. Under these circumstances, we call $Q$ a \textit{total transversal to $\mathcal{F}$} and denote by $\chi\colon Q \to M$, the natural inclusion of $Q$ into $M$.
\end{const}

\bigskip

Next restricting forms of $M$ to $Q$, we obtain a homomorphism $\chi^*\colon \Omega^*(M) \to \Omega^*(Q)$
and it is routine to show the following:

\begin{proposition}- This homomorphism $\chi^*$ restricts as an isomorphism
$$
\chi^*_0\colon \Omega_b^*(M,\FF) \to \Omega_\mathcal{H}^*(Q)
$$
where $\Omega_\mathcal{H}^*(Q)$ is the complex of usual De Rham forms on $Q$ which are invariant by the holonomy pseudo-group $\mathcal{H}$.
\end{proposition}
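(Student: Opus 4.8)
The plan is to reduce everything to a local statement about forms that are basic with respect to the defining submersions $f_i$. First I would record the following classical fact: if $\omega\in\Omega^r(M)$ satisfies $i_X\omega = i_X d\omega = 0$ for every vector field $X$ tangent to $\mathcal{F}$, then on each $V_i$ the form $\omega$ is \emph{basic} for the submersion $f_i\colon V_i\to Q_i$, i.e. there is a form $\omega_i\in\Omega^r(Q_i)$, necessarily unique since $f_i$ is a surjective submersion, with $\omega|_{V_i}=f_i^*\omega_i$. Indeed the two hypotheses say exactly that $\omega|_{V_i}$ is horizontal ($i_X\omega=0$) and invariant ($\LL_X\omega=i_Xd\omega+di_X\omega=0$) along the fibres of $f_i$; as those fibres are the plaques, which are connected open cubes, horizontality together with invariance is precisely the condition for descent along $f_i$ (in a local chart $U\times F$ with $F$ connected, a horizontal form with vanishing vertical Lie derivatives has coefficients depending only on the $U$-variables). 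Conversely, any $\omega$ that is locally of this shape is base-like, since $(f_i)_*X=0$ for $X$ tangent to $\mathcal{F}$ forces $i_X f_i^*\omega_i=0$ and $i_X d f_i^*\omega_i=i_X f_i^* d\omega_i=0$.

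Next I would identify the descended pieces $\omega_i$ with the restrictions of $\chi^*\omega$. Since $Q_i$ meets each plaque of $V_i$ in a single point and the plaques are the fibres of $f_i$, the inclusion $\chi_i:=\chi|_{Q_i}\colon Q_i\hookrightarrow V_i$ is a section of $f_i$ (under the tautological identification of the transversal cube $Q_i\subset M$ with the base cube $f_i(V_i)\subset\real^m$), so $f_i\circ\chi_i=\mathrm{id}_{Q_i}$. Hence for base-like $\omega$ we get $(\chi^*\omega)|_{Q_i}=\chi_i^*(f_i^*\omega_i)=(f_i\circ\chi_i)^*\omega_i=\omega_i$; thus $\chi^*\omega$ is encoded, chart by chart, by the family $\{\omega_i\}$.

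Then comes holonomy invariance. The defining cocycle relation $f_i=g_{ij}\circ f_j$ on $V_i\cap V_j$ gives $f_j^*\omega_j=\omega|_{V_i\cap V_j}=f_i^*\omega_i=f_j^*(g_{ij}^*\omega_i)$ there, and cancelling the injective $f_j^*$ yields $g_{ij}^*\omega_i=\omega_j$ on $f_j(V_i\cap V_j)$. As the $g_{ij}$ generate $\mathcal{H}$, this says exactly that $\chi^*\omega=\{\omega_i\}$ lies in $\Omega^*_{\mathcal{H}}(Q)$, so $\chi^*$ does restrict to a map $\chi^*_0\colon\Omega^*_b(M,\FF)\to\Omega^*_{\mathcal{H}}(Q)$; being a pullback, $\chi^*$ commutes with $d$, so $\chi^*_0$ is a morphism of complexes. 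Injectivity is immediate: if $\chi^*\omega=0$ then every $\omega_i=0$, hence $\omega|_{V_i}=f_i^*\omega_i=0$, hence $\omega=0$ since the $V_i$ cover $M$. For surjectivity, given $\theta=\{\theta_i\}\in\Omega^*_{\mathcal{H}}(Q)$ I set $\omega^{(i)}:=f_i^*\theta_i\in\Omega^r(V_i)$; on $V_i\cap V_j$ the cocycle relation and invariance give $\omega^{(i)}=f_i^*\theta_i=f_j^*g_{ij}^*\theta_i=f_j^*\theta_j=\omega^{(j)}$, so the $\omega^{(i)}$ glue to a global $\omega\in\Omega^r(M)$, which is base-like by the converse part of the first step, and $(\chi^*\omega)|_{Q_i}=\chi_i^* f_i^*\theta_i=\theta_i$, i.e. $\chi^*_0\omega=\theta$. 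The only non-formal ingredient here is the descent-along-a-submersion lemma of the first step (where connectedness of the plaques is used); the remaining steps are bookkeeping with the cocycle, so I expect no genuine obstacle — the word ``routine'' in the statement is justified.
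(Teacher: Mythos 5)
Your proof is correct and is precisely the "routine" argument the paper omits: local descent of base-like forms along the submersions $f_i$ (using connectedness of the plaques), identification of the descended forms with $\chi^*\omega$ via the section $\chi_i$, and the cocycle relation $f_i=g_{ij}\circ f_j$ to translate between gluing on $M$ and $\mathcal{H}$-invariance on $Q$. Since the paper gives no proof of this proposition, your write-up supplies exactly the expected details, with the one genuinely non-formal step (the descent lemma for a surjective submersion with connected fibres) correctly isolated and justified.
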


\subsection{Comparison theorem}

We are now in position to state and prove our main theorem. With the notations of the previous section, let $(Q,\mathcal{H})$ be the holonomy pseudo-group of the foliated manifold $(M,\FF)$ and let $\rho$ be the associated equivalence relation. We have a commutative square:
$$
\xymatrix{Q \ar[d]^{\pi} \ar[r]^\chi &  M\ar[d]^p \\
          Q/\rho \ar[r]^{\bar \chi}  &  M/\FF}
$$
where $Q$ and $M$ are endowed with their manifold diffeologies, $Q/\rho$ and $M/\FF$ with the corresponding quotient diffeologies and $\bar \chi$ is the map induced  by the inclusion $\chi$ of $Q$ into $M$.

\begin{lemma}- For any foliated manifold $(M,\FF)$, the map $\bar \chi$ is a smooth diffeological isomorphism.
\end{lemma}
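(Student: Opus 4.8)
The plan is to produce an explicit inverse to $\bar\chi$ and check that both $\bar\chi$ and this inverse are smooth for the quotient diffeologies. First I would show that $\bar\chi$ is a bijection of sets. Surjectivity: every point of $M$ lies on a plaque of some $V_i$, and that plaque meets $Q_i$ in exactly one point by the concrete choice of transversals in the Construction; hence every leaf of $\FF$ meets $Q$, so $p\circ\chi=\chi\circ\pi$ hits every class. Injectivity: if $\chi(q)$ and $\chi(q')$ lie on the same leaf of $\FF$, then by definition of the holonomy pseudo-group $\mathcal{H}$ the points $q,q'\in Q$ are related by $\rho$; conversely $\rho$-related points obviously map into the same leaf. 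So $\bar\chi$ is a set-theoretic bijection, and we may speak of $\bar\chi^{-1}$.

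Next I would verify smoothness of $\bar\chi$, which is essentially immediate: $\chi\colon Q\to M$ is a smooth map of manifolds, hence diffeological, and it is compatible with $\rho$ and $\FF$; by the functoriality of quotient diffeologies recalled in Example~\ref{exple.quotient}, the induced map $\bar\chi$ is diffeological with respect to the quotient diffeologies. The substantive step is smoothness of $\bar\chi^{-1}$, i.e. showing that $p=\bar\chi^{-1}\circ\chi\circ\pi$ composed appropriately lands in the right diffeology — concretely, that every plot of $M/\FF$ lifts, at least locally, to a plot of $Q/\rho$. A plot of $M/\FF$ is, by definition of the quotient diffeology and of the generating set, locally of the form $p\circ\beta$ for a smooth $\beta\colon \mathbb{R}^k\supset W\to M$. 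The point is then to replace $\beta$ locally by a smooth map into $Q$ representing the same class-valued plot. Shrinking $W$ so that $\beta(W)$ is contained in a single foliated chart $V_i$ (possible since $\{V_i\}$ is a cover and $W$ may be taken small around each point), the submersion $f_i\colon V_i\to Q_i\subset Q$ satisfies $f_i(\chi(q))=q$ for $q\in Q_i$ and sends a point of $V_i$ to the unique point of $Q_i$ on its plaque; thus $f_i\circ\beta\colon W\to Q$ is smooth and $\pi\circ(f_i\circ\beta)$ equals $\bar\chi^{-1}\circ(p\circ\beta)$ as parametrizations of $Q/\rho$, because $f_i(x)$ and $x$ always lie on the same plaque, hence on the same leaf. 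Since $\pi\circ(f_i\circ\beta)\in\pi(\mathcal{D})$ and local patches of plots belong to any diffeology by axiom~(2), this exhibits $\bar\chi^{-1}\circ(\text{plot of }M/\FF)$ as a plot of $Q/\rho$, so $\bar\chi^{-1}$ is diffeological.

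The main obstacle I anticipate is the local lifting in the last step: one must be careful that the generating parametrizations of the quotient diffeology on $M/\FF$ are exactly those of the form $p\circ\beta$ with $\beta$ a plot of the manifold diffeology of $M$ (this is the definition of the direct image in Example~\ref{exple.quotient}(b)), and that "locally belongs to $\mathcal{D}^r$" is enough by axiom~(2) — so that it suffices to handle $\beta$ with image in one chart $V_i$. The compatibility $\pi\circ f_i\circ\beta = \bar\chi^{-1}\circ p\circ\beta$ then reduces to the single geometric fact, guaranteed by the Construction, that $f_i$ maps each point to the transverse point on the same plaque and hence on the same $\FF$-leaf; everything else is formal manipulation of quotient diffeologies. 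I would close by remarking that, combined with the preceding Proposition, this Lemma will immediately give the isomorphism $H^*_b(M,\FF)\cong H^*(M/\FF)$ claimed in the main theorem.
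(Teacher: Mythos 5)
Your proposal is correct and follows essentially the same route as the paper: smoothness of $\bar\chi$ from functoriality of quotient diffeologies, and smoothness of $\bar\chi^{-1}$ by using the local submersions $f_i\colon V_i\to Q_i$ to lift (generating) plots $p\circ\beta$ of $M/\FF$ to plots $\pi\circ f_i\circ\beta$ of $Q/\rho$. You additionally spell out the bijectivity of $\bar\chi$, which the paper leaves as an assertion.
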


\begin{proof}- First note that $\chi$ and the three maps $\pi$, $p$ and $\bar \chi$ are smooth by definition of the quotient diffeologies. Next notice that $\bar \chi$ is bijective and thus it just remains  to show that its inverse is smooth.

To do so recall that the manifold diffeology of $M$ can be generated by the set $\mathcal{G}$ of smooth parametrizations whose image is contained in some element $V_j$ of the covering $\mathcal{V}$. Then $p(\mathcal{G})$ generates the quotient diffeology of $M/\FF$ and for any plot $\gamma \in p(\mathcal{G})$, there exists an index $j$ and a plot $\alpha$ with range in $V_j$ such that $\gamma = p \circ \alpha$. Let $f_j\colon V_j \to Q_j$ be the local projection provided by the foliated cocycle $\mathcal{C}$, then $f_j \circ \alpha$ is a plot in $Q_j$ such that
$$
\bar \chi \circ \pi \circ f_j \circ\alpha = p \circ \alpha = \gamma
$$
showing that $(\bar \chi)^{-1}[p(\mathcal{G})]$ is contained in the diffeology of $Q/\rho$. This latter condition means that $(\bar \chi)^{-1}$ is smooth; the proof is complete.
\end{proof}

In the sequel we will identify base-like forms and forms on $Q/\rho$. Next we focus on the projection $\pi\colon Q \to Q/\rho$. The crucial point in the procedure will be the following geometrical observation.

\begin{lemma}\label{lem.crucial}- Let $\alpha, \beta \colon U \to Q$ be two plots of the transversal $Q$. If $\pi \circ \alpha = \pi \circ \beta$, there exists a countable family of open sets $W_s \subset U$ and elements $h_s \in \mathcal{H}$ such that:

i) $\bigcup_s W_s$ is dense in $U$,

ii) $\beta = h_s \circ \alpha$ when restricted to $W_s$.
\end{lemma}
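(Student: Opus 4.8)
The plan is to reduce the equality $\pi\circ\alpha=\pi\circ\beta$ to a statement about the holonomy pseudo-group by working locally and then using the structure of a total transversal. Fix a point $t_0\in U$ and set $q_0=\alpha(t_0)$, $q_0'=\beta(t_0)$. Since $\pi\circ\alpha=\pi\circ\beta$, the points $q_0$ and $q_0'$ lie in the same $\rho$-class, i.e.\ on the same leaf of $\FF$; hence there is an element $h\in\mathcal{H}$ defined near $q_0$ with $h(q_0)=q_0'$. The first step is to argue that, after shrinking to a connected open neighbourhood $W$ of $t_0$, one actually has $\beta|_W=h\circ\alpha|_W$. The reason is that both $\beta|_W$ and $h\circ\alpha|_W$ are smooth parametrizations into $Q$ which agree at $t_0$ and which descend to the \emph{same} plot of $Q/\rho$ (using that $h\in\mathcal{H}$ acts within a single $\rho$-class); since $Q$ is a disjoint union of the transverse cubes $Q_i$, each meeting a plaque of $V_i$ in exactly one point, the projection $p\circ\chi$ restricted to a single $Q_i$ is injective on plaques, so two plots of $Q$ that become equal in $M/\FF$ and agree at one point must agree on a neighbourhood of that point. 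This injectivity-on-transversals property is the geometric heart of the argument.

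Next I would globalise. For each $t\in U$ the above produces an open neighbourhood $W_t$ and an element $h_t\in\mathcal{H}$ with $\beta|_{W_t}=h_t\circ\alpha|_{W_t}$. The family $\{W_t\}_{t\in U}$ is an open cover of $U$; since $U\subset\real^n$ is second countable (hence Lindel\"of), we may extract a countable subcover $\{W_s\}_{s\in\mathbb{N}}$, with associated elements $h_s\in\mathcal{H}$. Then $\bigcup_s W_s=U$, which is certainly dense in $U$, and $\beta=h_s\circ\alpha$ on each $W_s$; this gives (i) and (ii) simultaneously. (The statement only asks for density of the union, which the full cover trivially provides; the weaker formulation presumably anticipates situations — such as when $\alpha$ is not required to land in a single $Q_i$ near every point — where one first restricts to the dense open set on which $\alpha$ avoids the ``boundary" strata between the cubes $Q_i$, and the same argument then applies there.)

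The step I expect to be the main obstacle is the local claim, i.e.\ showing that $\pi\circ\alpha=\pi\circ\beta$ forces $\beta=h\circ\alpha$ locally rather than merely forces $\alpha(t)$ and $\beta(t)$ to be holonomy-related pointwise (a priori by a different element of $\mathcal{H}$ for each $t$). Making this rigorous requires using that $Q$ is a genuine total transversal: each $Q_i$ is transverse to $\FF$ and cuts each plaque of $V_i$ in a single point, so locally the leaf space is modelled faithfully by $Q_i$, and the holonomy pseudo-group $\mathcal{H}$ is exactly the pseudo-group of transition maps between the $Q_i$. One then notes that near $t_0$ the plot $\alpha$ takes values in one cube $Q_i$ and $\beta$ in one cube $Q_j$; the hypothesis says $\alpha$ and $\beta$ have the same image leaf-by-leaf, so the map sending $\alpha(t)$ to $\beta(t)$ is, by definition of the holonomy pseudo-group associated to the cocycle $\mathcal{C}$, the restriction of a single $h\in\mathcal{H}$. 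Care is needed when $\alpha$ or $\beta$ crosses from one cube $Q_i$ to another as $t$ varies, which is precisely why the conclusion is phrased with a countable family rather than a single $h$ and a single $W$.
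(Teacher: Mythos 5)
There is a genuine gap, and it sits exactly at the step you flag as ``the geometric heart of the argument.'' The claim that two plots of $Q$ which induce the same plot of $Q/\rho$ and agree at one point must agree on a neighbourhood of that point is false. The relation $\rho$ identifies two points of $Q$ whenever they lie on the same \emph{leaf}, not merely on the same plaque, and a leaf may meet a single cube $Q_i$ in many points and may carry nontrivial holonomy. Concretely, if $h\in\mathcal{H}$ fixes the point $\alpha(t_0)$ but is not the identity on any neighbourhood of it (this happens for any leaf with nontrivial holonomy, and for recurrent leaves of, say, a minimal foliation), then $\beta=h\circ\alpha$ and $\alpha$ agree at $t_0$ and satisfy $\pi\circ\alpha=\pi\circ\beta$, yet differ on every neighbourhood of $t_0$. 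The ``each $Q_i$ cuts each plaque in exactly one point'' property only gives injectivity of $Q_i$ into the \emph{local} leaf space of the chart $V_i$; it says nothing about the global equivalence relation $\rho$. As a result, the set $X_h=\{x\in U:\ \beta(x)=h(\alpha(x))\}$ is closed but in general not open, you cannot manufacture an open neighbourhood $W_t$ of each $t$ on which a single $h_t$ works, and the Lindel\"of subcover argument never gets started. This is also why the lemma is stated with $\bigcup_s W_s$ only \emph{dense} in $U$ rather than equal to $U$: your parenthetical explanation for the weaker formulation (plots straddling several cubes $Q_i$) is not the real reason, since the $Q_i$ are open and disjoint, so every plot locally lands in a single cube.

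The paper's proof goes a different, and essentially unavoidable, route: it observes that $U=\bigcup_{h}X_h$ where each $X_h$ is closed and the family of (maximal elements of) $\mathcal{H}$ is countable, so by the Baire category theorem some $X_{h_1}$ has nonempty interior $W_1$; one then repeats the argument on $U\setminus X_{h_1}$ and iterates to produce a countable family $\{W_s\}$ with dense union. If you want to repair your write-up, replace the local uniqueness claim and the Lindel\"of step by this Baire category argument; the pointwise observation you start from (each $\beta(x)$ equals $h_x(\alpha(x))$ for \emph{some} $h_x\in\mathcal{H}$) is correct and is all that survives of the local analysis.
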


\begin{proof}- For any element $h \in \mathcal{H}$ there exists $X_h \subset U$, possibly empty, such that $\beta(x) = h \circ \alpha(x)$ for any $x \in X_h$ and this set is a closed subset of $U$ by continuity. On the other hand, condition $\pi \circ \alpha = \pi \circ \beta$ implies that for any point $x \in U$ there exists an element $h_x \in \mathcal{H}$ such that $\beta(x) = (h_x \circ \alpha)(x)$ and consequently  $\bigcup X_h$ covers $U$. But the set of maximal elements of $\mathcal{H}$ is countable and therefore by Baire theory there exists an element $h_1$ such that $X_{h_1}$ has non empty interior $W_1$.

We apply the same argument to $U \backslash X_{h_1}$ and repeating infinitely many times the construction if necessary, we obtain the wanted sequence.
\end{proof}

\begin{lemma}- The homomorphism $\pi^* \colon \Omega^*(Q) \leftarrow \Omega^*(Q/\rho)$
is an isomorphism of $\Omega^*(Q/\rho)$ onto the sub-complex $\Omega_\mathcal{H}^*(Q)$ of forms on $Q$ invariant by $\mathcal{H}$.

\end{lemma}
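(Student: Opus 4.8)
The plan is to show that $\pi^*$ is well-defined with image in $\Omega_\mathcal{H}^*(Q)$, that it is injective, and that it is surjective onto $\Omega_\mathcal{H}^*(Q)$; the content is entirely in surjectivity and in the well-definedness of the inverse map, both of which rest on Lemma \ref{lem.crucial}.

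First I would treat the easy directions. Since $\pi\colon Q\to Q/\rho$ is smooth (by definition of the quotient diffeology), $\pi^*$ is a morphism of differential complexes by the functoriality recorded in Section \ref{DIFFEOLOGY}. To see that $\pi^*\omega$ lands in $\Omega_\mathcal{H}^*(Q)$, note that for any $h\in\mathcal{H}$, defined on an open set $\mathrm{dom}(h)\subset Q$, the inclusion $\mathrm{dom}(h)\hookrightarrow Q$ and $h$ itself are plots of $Q$ that become equal after composing with $\pi$ (as $h$ moves points within $\rho$-classes); hence if $\beta=h\circ\alpha$ for a plot $\alpha$, then $(\pi^*\omega)_{\beta}=\omega_{\pi\circ\beta}=\omega_{\pi\circ\alpha}=(\pi^*\omega)_\alpha$, and unwinding the compatibility condition of De~Rham forms this says exactly $h^*(\pi^*\omega)_{\mathrm{id}}=(\pi^*\omega)_{\mathrm{id}}$ on the relevant domain, i.e.\ $\pi^*\omega$ is $\mathcal{H}$-invariant. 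Injectivity is immediate because $\pi$ is surjective: a plot $\gamma$ of $Q/\rho$ is, at least locally, of the form $\pi\circ\alpha$ for a plot $\alpha$ of $Q$ (this is the defining property of the quotient diffeology together with axiom~(2)), so $\pi^*\omega=0$ forces $\omega_\gamma=0$ for all $\gamma$, hence $\omega=0$.

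The heart of the matter is constructing an inverse, i.e.\ showing that every $\eta\in\Omega_\mathcal{H}^*(Q)$ is $\pi^*\omega$ for some De~Rham form $\omega$ on $Q/\rho$. Given $\eta$, define $\omega_\gamma$ for a plot $\gamma\colon U\to Q/\rho$ as follows: locally write $\gamma=\pi\circ\alpha$ for a plot $\alpha$ of $Q$ and set $\omega_\gamma:=\alpha^*\eta$ on that piece of $U$; then glue using axiom~(2). For this to be well-defined I must check two things: (a) independence of the chosen lift $\alpha$, and (b) the compatibility condition $\omega_{\gamma\circ h}=h^*\omega_\gamma$. For (a), suppose $\alpha,\beta\colon U\to Q$ are two plots with $\pi\circ\alpha=\pi\circ\beta$; Lemma \ref{lem.crucial} gives a countable family of open sets $W_s$ with dense union on which $\beta=h_s\circ\alpha$ for some $h_s\in\mathcal{H}$, so on each $W_s$ we get $\beta^*\eta=\alpha^*h_s^*\eta=\alpha^*\eta$ by $\mathcal{H}$-invariance of $\eta$; since $\alpha^*\eta$ and $\beta^*\eta$ are smooth forms on $U$ agreeing on the dense open set $\bigcup_s W_s$, they agree on all of $U$ by continuity. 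Point (b) is then routine: if $\gamma=\pi\circ\alpha$ then $\gamma\circ h=\pi\circ(\alpha\circ h)$ and $\alpha\circ h$ is a plot of $Q$, so $\omega_{\gamma\circ h}=(\alpha\circ h)^*\eta=h^*\alpha^*\eta=h^*\omega_\gamma$. Finally $\pi^*\omega=\eta$ because, for $\alpha$ a plot of $Q$, $(\pi^*\omega)_\alpha=\omega_{\pi\circ\alpha}=\alpha^*\eta$, which is the tautological identification of $\eta$ with itself; and $\omega$ commutes with $d$ since each local formula $\omega_\gamma=\alpha^*\eta$ does. This completes the proof.

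The main obstacle is step (a): the two lifts $\alpha,\beta$ need not differ by a single element of $\mathcal{H}$ globally, only pointwise, and it is precisely Lemma \ref{lem.crucial} — with its Baire-category argument producing a countable family of holonomy elements whose domains of agreement have dense union — that bridges this gap, after which a density-plus-continuity argument for smooth differential forms closes it. The rest is bookkeeping with the diffeological axioms.
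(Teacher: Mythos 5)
Your proposal is correct and follows essentially the same route as the paper: the inverse is built by pulling back along local lifts, with Lemma \ref{lem.crucial} plus the density-and-continuity argument guaranteeing independence of the lift, exactly as in the paper's construction of $\sigma$. The only cosmetic difference is that you verify injectivity separately, whereas the paper gets it for free from exhibiting the two-sided inverse.
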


\begin{proof}- The proof is similar to that of theorem \ref{VARIEDAD}.

First we note that the image of $\pi^*$ is contained in  $\Omega_\mathcal{H}^*(Q)$ because $\pi\circ h =\pi$ for any $h\in \mathcal{H}$. Thus it remains to construct an inverse $\sigma$  of  $\pi^*$  on the $\mathcal{H}$-invariant forms as follows.

Fix $v \in \Omega^*_\mathcal{H}(Q)$ and let $\alpha, \beta: U \to Q$ be two plots of $Q$ such that $\pi \circ \alpha = \pi \circ \beta = \gamma$ a plot of $Q/\rho$. As $v$ is $\mathcal{H}$-invariant, we can consider the family of local diffeomorphisms $h_s$ and open sets $W_s$ provided by lemma \ref{lem.crucial}. Because any $h_s$ belongs to $\mathcal{H}$ and $v$ is $\mathcal{H}$-invariant, we get
$$
\beta^*(v) = (h_s \circ \alpha)^*(v)= \alpha^*(h_s^* v) = \alpha^*(v) \quad \mathrm{on\ }W_s,
$$
and as $\bigcup_s W_s$ is dense in $U$, it follows that $\alpha^*(v) = \beta^*(v)$ and so this form depends only on $\gamma$; we denote it by $u_\gamma$.

Next recall that the diffeology of $Q/\rho$ is generated by the set $\mathcal{G}$ of plots $\gamma\colon U \to Q/\rho$ for which there exists a lift $\alpha\colon U \to Q$ verifying $\gamma = \pi \circ \alpha$. Then we define a homomorphism $\sigma \colon \Omega^*_\mathcal{H}(Q) \to \Omega^*(Q/\rho)$ by setting $\sigma(v)_\gamma = u_\gamma$, $\gamma\in\mathcal{G}$. It is easy to check that in this way we obtain a well defined  diffeological form $u\in \Omega^*(Q/\rho)$.
It verifies $\pi^* u = v$ because by definition 
$$(\pi^*u)_\alpha= u_{\pi\circ\alpha}= u_\gamma=\alpha^*(v)$$ on the   domain  of   $\gamma = \pi \circ \alpha$. The proof is complete.
\end{proof}

To conclude, we note that $p^* = (\chi_0^*)^{-1} \circ \pi^* \circ \bar \chi^*$ which leads to our \textit{comparison theorem}:

\begin{theorem}\label{MAIN}- For any foliation $\FF$, the projection $p\colon  M \to M/\FF$ induces an isomorphism
$$
p^*\colon \Omega^*_b(M,\FF) \leftarrow \Omega^*(M/\FF).
$$
Consequently the De~Rham cohomology $H^*(M/\FF)$ of $M/\FF$ is isomorphic to the base-like cohomology $H^*_b(M,\FF)$ of $\FF$.
\end{theorem}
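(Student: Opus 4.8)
\emph{Approach.} The plan is to realize $p^*$ as a composite of three maps that have already been shown to be isomorphisms, reading them off the commutative square relating $p$, $\chi$, $\pi$ and $\bar\chi$. By contravariant functoriality of the pullback of De~Rham forms, the equality $p\circ\chi=\bar\chi\circ\pi$ gives $\chi^*\circ p^*=\pi^*\circ\bar\chi^*$ as maps $\Omega^*(M/\FF)\to\Omega^*(Q)$. Here $\bar\chi^*\colon\Omega^*(M/\FF)\to\Omega^*(Q/\rho)$ is an isomorphism because $\bar\chi$ is a diffeological isomorphism (the preceding Lemma); $\pi^*$ is an isomorphism of $\Omega^*(Q/\rho)$ onto the subcomplex $\Omega_\mathcal{H}^*(Q)$ of $\mathcal{H}$-invariant forms (the last Lemma); and $\chi_0^*\colon\Omega^*_b(M,\FF)\to\Omega_\mathcal{H}^*(Q)$, the restriction of $\chi^*$ to base-like forms, is an isomorphism (the Proposition). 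Granting that $p^*$ takes values in $\Omega^*_b(M,\FF)$, the displayed identity becomes $\chi_0^*\circ p^*=\pi^*\circ\bar\chi^*$ on $\Omega^*(M/\FF)$, so $p^*=(\chi_0^*)^{-1}\circ\pi^*\circ\bar\chi^*$ is a composite of isomorphisms, hence itself an isomorphism $\Omega^*_b(M,\FF)\leftarrow\Omega^*(M/\FF)$.

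\emph{The one verification needed.} It remains to check, and this is routine, that $p^*\omega$ is base-like for every $\omega\in\Omega^*(M/\FF)$. Since $d(p^*\omega)=p^*(d\omega)$ and $d\omega$ is again a diffeological form on $M/\FF$, it suffices to show $i_X(p^*\eta)=0$ for an arbitrary diffeological form $\eta$ of positive degree on $M/\FF$ and every vector field $X$ tangent to $\FF$. I would argue locally: the manifold diffeology of $M$ is generated by the plots $\alpha$ whose image lies in a single distinguished open set $V_j$ of the foliated cocycle, and for such a plot the identity $p|_{V_j}=\bar\chi\circ\pi\circ f_j$ (used in the proof of the Lemma on $\bar\chi$) gives $p\circ\alpha=\gamma_j\circ(f_j\circ\alpha)$ with $\gamma_j=\bar\chi\circ\pi\circ\chi|_{Q_j}$ a plot of $M/\FF$; by the compatibility condition, $(p^*\eta)_\alpha=(f_j\circ\alpha)^*(\eta_{\gamma_j})$. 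Being a pullback along $f_j\circ\alpha$, the form $(p^*\eta)_\alpha$ is annihilated by contraction with any tangent vector $\xi$ for which $d\alpha(\xi)$ lies in $\ker df_j=T\FF$; letting $\alpha$ run over chart-inverse plots (on which $d\alpha$ is an isomorphism), this says precisely that $i_X(p^*\eta)=0$ for every vector field $X$ tangent to $\FF$. Running the same argument for $d\omega$ in place of $\omega$ yields $i_X\,d(p^*\omega)=0$, so $p^*\omega\in\Omega^*_b(M,\FF)$.

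\emph{Conclusion and expected difficulty.} Finally, $p^*$ commutes with $d$, being induced by the diffeological map $p$ and hence a morphism of differential complexes; so, being a bijective chain map, it induces an isomorphism $p^*\colon H^*(M/\FF)\to H^*_b(M,\FF)$ in cohomology. I do not anticipate any genuine obstacle in assembling the argument: all of the substance has been concentrated in the earlier results, the truly delicate point being the Baire/density argument of Lemma~\ref{lem.crucial}, which is exactly what makes $\pi^*$ surject onto $\Omega_\mathcal{H}^*(Q)$.
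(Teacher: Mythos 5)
Your proposal is correct and follows exactly the paper's route: the paper concludes by noting $p^*=(\chi_0^*)^{-1}\circ\pi^*\circ\bar\chi^*$, read off the same commutative square, with all the substance residing in the three earlier results just as you say. Your explicit verification that $p^*\omega$ is base-like (via the local factorization $p|_{V_j}=\gamma_j\circ f_j$ through the submersions of the foliated cocycle) is a detail the paper leaves implicit, and it is needed to legitimately replace $\chi^*$ by $\chi_0^*$ in the identity $\chi^*\circ p^*=\pi^*\circ\bar\chi^*$; your argument for it is sound.
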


\section{$C^0$-invariance of the De Rham cohomology}

It is a well known fact that De Rham cohomology is a topological invariant for smooth manifolds. Here we investigate the corresponding question for smooth diffeological spaces focusing on a restricted class of such spaces. The goal is to recover, as an application of our theorem \ref{MAIN}, the topological invariance of the base-like cohomology for Riemannian foliations on compact manifolds established by A.~El~Kacimi and M.~Nicolau in \cite{NICOLAUKACIMI}. Our argument will be very quick and straightforward showing the power and simplicity of the diffeological approach.

\subsection{The case of strongly homogeneous quotients}

Consider a strongly homogeneous diffeological space $G/\Gamma$ as defined in example \ref{exple.quotient} above. If $\tilde \Gamma = q^{-1}(\Gamma)$, the universal covering  $q\colon \tilde G \to G$ induces a smooth diffeological map $\tilde q\colon \tilde G/\tilde \Gamma \to G/\Gamma$, which is a smooth isomorphism of diffeological spaces. Thus without loss of generality, we may always assume that $G$ is simply connected (and connected).

The group of smooth diffeological isomorphisms of $G/\Gamma$ was explicitly computed in \cite{MACHECT}.
Here we focus on continuous diffeological isomorphisms and want to prove the following result:

\begin{lemma}\label{lem.strong}- Let $G/\Gamma$ and $G'/\Gamma'$ be two strongly homogeneous diffeological spaces. Then any $C^0$-diffeological isomorphism
$$\varphi\colon G/\Gamma \to G'/\Gamma'$$
is indeed a $C^\infty$-isomorphism and the two Lie groups $G$ and $G'$ are isomorphic.
\end{lemma}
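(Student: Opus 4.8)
The plan is to reconstruct the Lie group $G$, up to a left translation, from the diffeological space $G/\Gamma$ by producing a \emph{global continuous lift} of $\varphi$. First I would invoke the reduction already made in the text and assume $G$ and $G'$ simply connected. Two elementary facts will do most of the work. (i) Since $\Gamma$ is countable, in the subspace topology inherited from the metrizable group $G$ it is totally disconnected; hence every continuous map from a connected set into $\Gamma$ is constant, and likewise for $\Gamma'$. (ii) By the definition of the quotient diffeology (and since we are in class $C^0$), every plot of $G/\Gamma$ is, on a neighbourhood of each point of its domain, either constant or of the form $\pi\circ\alpha$ for a continuous $G$-valued map $\alpha$; consequently plots lift locally to $G$, and by (i) any two continuous lifts of the same plot over a connected set differ by right translation by a fixed element of $\Gamma$, applied to the continuous map $g\mapsto\alpha(g)^{-1}\beta(g)\in\Gamma$.

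Next I would build the lift. Starting from a chart around the identity, the plot $\varphi\circ\pi\circ\kappa$ of $G'/\Gamma'$ lifts continuously to $G'$ near the base point; propagating such local lifts along paths in $G$ and reconciling them on (connected) overlaps by the constant $\Gamma'$-element provided above, the simple connectedness of $G$ kills the monodromy --- equivalently, the obstruction to globalizing is a principal $\Gamma'$-bundle with discrete group over the simply connected $G$, hence trivial --- and yields a continuous map $\tilde\varphi\colon G\to G'$ with $\pi'\circ\tilde\varphi=\varphi\circ\pi$. Running the same construction for $\varphi^{-1}$ gives $\tilde\psi\colon G'\to G$. Since $\pi\circ\tilde\psi\circ\tilde\varphi=\pi$, fact (i) forces $\tilde\psi\circ\tilde\varphi$ to be right translation by a fixed element of $\Gamma$; replacing $\tilde\psi$ by its composite with that translation we may assume $\tilde\psi\circ\tilde\varphi=\mathrm{id}_G$, and then the identity $\tilde\varphi=(\tilde\varphi\circ\tilde\psi)\circ\tilde\varphi$ forces $\tilde\varphi\circ\tilde\psi=\mathrm{id}_{G'}$. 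Thus $\tilde\varphi$ is a homeomorphism carrying each coset $g\Gamma$ onto $\tilde\varphi(g)\Gamma'$.

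The group-theoretic heart comes next. For a fixed $\gamma\in\Gamma$ the continuous map $g\mapsto\tilde\varphi(g)^{-1}\tilde\varphi(g\gamma)$ takes values in $\Gamma'$, hence by (i) is constant, equal to some $\theta(\gamma)\in\Gamma'$; so $\tilde\varphi(g\gamma)=\tilde\varphi(g)\,\theta(\gamma)$ for all $g\in G$, and a direct check shows $\theta\colon\Gamma\to\Gamma'$ is a group homomorphism, bijective with inverse the analogous map built from $\tilde\psi$. Put $F:=L_{\tilde\varphi(e)^{-1}}\circ\tilde\varphi$; the relation at $g=e$ reads $F|_\Gamma=\theta$, a homomorphism on the dense subgroup $\Gamma$. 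Since $F$ is continuous, a routine density argument promotes $F$ to a continuous group homomorphism $G\to G'$, which is moreover a homeomorphism because $\tilde\varphi$ is; hence $F$ is a topological isomorphism, and by the classical automatic-smoothness theorem for Lie groups it is an isomorphism of Lie groups. In particular $G\cong G'$, and $\tilde\varphi=L_{\tilde\varphi(e)}\circ F$ is smooth.

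Finally, $\tilde\varphi$ is smooth and compatible with the two orbit relations, so by the functoriality of the quotient diffeology the induced map $\varphi$ is $C^\infty$-diffeological, and the same for $\varphi^{-1}$; therefore $\varphi$ is a $C^\infty$-diffeological isomorphism, which completes the proof. The hard part will be the step turning ``$\varphi$ is $C^0$-diffeological'' into the existence of a single globally defined continuous lift $\tilde\varphi\colon G\to G'$: this is where the local structure of the quotient diffeology, the total disconnectedness of the countable dense subgroups, and the simple connectedness of $G$ and $G'$ are all genuinely used; once $\tilde\varphi$ is in hand the remaining arguments are formal.
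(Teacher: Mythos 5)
Your proof is correct and follows essentially the same route as the paper: lift $\varphi$ to a homeomorphism $G\to G'$, use total disconnectedness of $\Gamma'$ to make the lift equivariant up to a left translation, then use density of $\Gamma$ together with automatic smoothness of continuous Lie group homomorphisms. The only difference is that you build the lift by hand (local lifting of plots plus a monodromy argument over the simply connected $G$), whereas the paper obtains it directly from the diffeological covering space theory of Iglesias' thesis; both are valid.
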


\begin{proof}- The proof uses the theory of covering spaces in the diffeological category  developed in \cite{IGLESIASTESIS} which is formally the same as in the topological category. It follows the lines of an argument first used in \cite[p.~252]{MACHECT}.

(a) As $G$ and $G'$ are simply connected, they are the universal coverings (in the category of diffeological spaces) of $G/\Gamma$ and $G'/\Gamma'$ respectively. Thus the map $\varphi$ lifts as a $C^0$-diffeological isomorphism, that is we have a commutative diagram
$$
\xymatrix{ G \ar[d] \ar[r]^\phi & G' \ar[d] \\
   G/\Gamma \ar[r]^\varphi & G'/\Gamma'. }
$$
where the lift $\phi$ is indeed a homeomorphism. And our goal being to prove that $\varphi$ is a smooth diffeological isomorphism it will be enough to show that the lift $\phi$ is smooth in the usual (or equivalently diffeological) sense. To do so, we will show that $e$ being the identity of $G$ and $\nu = \phi(e)^{-1}$, the map $\psi = L_\nu \circ \phi$  is a continuous and thus a smooth Lie group isomorphism.

(b) Indeed, fix an element $\gamma\in\Gamma$. For any $g\in G$ the two elements $g$ and $g\gamma$ are equivalent by $\Gamma$ and consequently $\phi(g)$ and $\phi(g\gamma)$ are equivalent by $\Gamma'$ because $\phi$ is equivariant with respect to the right actions of $\Gamma$ and $\Gamma'$. This means also that for any $g$
$$
\Phi(g) := \phi(g)^{-1}\cdot \phi(g\gamma)\in\Gamma^\prime
$$
and $G$ being connected and $\Gamma'$ being totally disconnected, the continuous map $\Phi\colon G \to \Gamma'$ is constant which implies
$$
\phi(g)^{-1}\cdot \phi(g\gamma)= \phi(e)^{-1}\cdot \phi(\gamma)$$
thus
$$\phi(g\gamma) = \phi(g) \cdot \phi(e)^{-1}\cdot \phi(\gamma),$$
for any $\gamma \in \Gamma$ and $g \in G$. The latter relation can be rewritten as
$$
 L_\nu[\phi(g\gamma)] = L_\nu[\phi(g)] \cdot L_\nu[\phi(\gamma)]  $$or
 $$\psi(g\gamma) = \psi(g)\cdot \psi(\gamma).$$

(c) Consequently, we see that the $\psi$ which is continuous by definition restricts to a continuous group homomorphism from $\Gamma$ to $\Gamma'$. And $\Gamma$ being dense in $G$ it follows easily that $\psi\colon G \to G'$ is a homomorphism thus a continuous group isomorphism and finally a smooth Lie group isomorphism. This means that $G$ is isomorphic to $G'$ and $\varphi$ is smooth.
\end{proof}

\subsection{$C^0$-invariance of the base-like cohomology of Riemannian foliations}

Let $\mathfrak{G}$ be the Lie algebra of a connected (and simply connected) Lie group $G$. A foliation $\FF$ on a closed manifold $M$ is a \textit{$\mathfrak{G}$-Lie foliation} if it is defined by a non vanishing $\mathfrak{G}$-valued $1$-form $\omega$ verifying the Maurer-Cartan equation
$$
\omega + \frac{1}{2}[\omega,\omega] = 0.
$$
These foliations have been introduced first by E. Fedida  who gives the following nice description  \cite {FEDIDA,MACIAS}.

(a) A $\mathfrak{G}$-Lie foliation $(M,\FF)$ lifts to the universal covering $p\colon  \tilde M \to M$ as a locally trivial fibration $\tilde \FF$ over the Lie group $G$. This fibration is preserved by the action of the fundamental group $\pi_1(M)$ and there is a representation $h\colon  \pi_1(M) \to G$ whose image $\Gamma$ is a finitely generated subgroup of $G$ called the \textit{Global Holonomy group of $\FF$}.

(b) In particular the leaf-space $M/\FF$ identifies as a diffeological space with the strongly homogeneous space $G/\Gamma$ and we get the following commutative diagram of diffeological spaces and maps:
$$
\xymatrix{ \tilde M \ar[d]^p \ar[r]^D & G \ar[d]^{\pi} \\
                M \ar[r]^{\bar D}   & G/\Gamma,}
$$
where $\bar D$ and $\pi$ are the canonical projections. Moreover the foliation $\FF$ is minimal if and only if $\Gamma$ is dense in $G$.

\bigskip

As a trivial consequence of lemma \ref{lem.strong}, we get the following invariance result:

\begin{theorem}- Let $(M,\FF)$ and $(M,\FF')$ be two minimal Lie foliations conjugate by a foliated homeomorphism $\psi$.  Consider the commutative diagram
$$
\xymatrix{ M \ar[d] \ar[r]^\psi & M' \ar[d] \\
G/\Gamma \ar[r]^\varphi & G'/\Gamma' }
$$
where $G/\Gamma$ and $G'/\Gamma'$ are the corresponding leaf-spaces and $\varphi$ is the $C^0$-diffeological isomorphism induced by $\psi$.

Then $\varphi$ and $\psi$ are both smooth. In particular $\varphi$ induces   isomorphisms between the base-like complexes $\Omega^*_b(M,\FF)$ and $\Omega^*_b(M',\FF')$ and between the base-like cohomology groups $H^*_b(M,\FF)$ and $H^*_b(M',\FF')$.
\end{theorem}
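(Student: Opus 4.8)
The plan is to reduce everything to Lemma~\ref{lem.strong} via Fedida's structure theorem, and then to read off the cohomological statement from Theorem~\ref{MAIN}. First I would produce the map $\varphi$ and check that it is a $C^0$-diffeological isomorphism. By Example~\ref{exple.manifold} the foliated homeomorphism $\psi\colon M\to M'$ is already a $C^0$-diffeological map for the manifold diffeologies, and since it sends leaves of $\FF$ to leaves of $\FF'$ it is compatible with the two leaf equivalence relations; hence, by the functoriality of quotient diffeologies recalled in Example~\ref{exple.quotient}, it induces a $C^0$-diffeological map $\varphi\colon M/\FF\to M'/\FF'$. Applying the same remark to $\psi^{-1}$ shows that $\varphi$ is a $C^0$-diffeological isomorphism, which is exactly the kind of map to which Lemma~\ref{lem.strong} applies, provided we know that source and target are strongly homogeneous quotients.

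That identification is supplied by part (b) of Fedida's description recalled above: since $\FF$ is a minimal $\mathfrak{G}$-Lie foliation, its leaf space is identified, as a diffeological space, with $G/\Gamma$, where $\Gamma$ is the global holonomy group, and $\Gamma$ is dense in $G$ precisely because $\FF$ is minimal; likewise $M'/\FF'$ is identified with $G'/\Gamma'$. Through these identifications $\varphi$ becomes a $C^0$-diffeological isomorphism $G/\Gamma\to G'/\Gamma'$ between strongly homogeneous diffeological spaces, so Lemma~\ref{lem.strong} applies and gives simultaneously that $\varphi$ is a $C^\infty$-diffeological isomorphism and that $G$ and $G'$ are isomorphic Lie groups.

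The cohomological half is then formal: by Theorem~\ref{MAIN} the pullbacks $p^*$ and $p'^*$ are isomorphisms of differential complexes $\Omega^*(M/\FF)\cong\Omega^*_b(M,\FF)$ and $\Omega^*(M'/\FF')\cong\Omega^*_b(M',\FF')$ commuting with $d$, and since $\varphi$ is now a smooth diffeological isomorphism, $\varphi^*$ is an isomorphism of De~Rham complexes $\Omega^*(M'/\FF')\to\Omega^*(M/\FF)$. The composite $p^*\circ\varphi^*\circ(p'^*)^{-1}$ is therefore the asserted isomorphism $\Omega^*_b(M',\FF')\to\Omega^*_b(M,\FF)$, and it passes to an isomorphism $H^*_b(M',\FF')\cong H^*_b(M,\FF)$ on cohomology.

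The delicate point I expect to be the main obstacle is the assertion that $\psi$ itself is smooth. The natural route is to lift $\psi$ to a homeomorphism $\tilde\psi\colon\tilde M\to\tilde M'$ of universal covers; since $\tilde\psi$ carries the Fedida fibration $D\colon\tilde M\to G$ to $D'\colon\tilde M'\to G'$ (leaves of $\tilde\FF$ are the fibres of $D$), it descends to a homeomorphism of the bases which, by the same computation as in the proof of Lemma~\ref{lem.strong} (constancy of $g\mapsto\phi(g)^{-1}\phi(g\gamma)$ and density of $\Gamma$), is a Lie group isomorphism $\phi$ after composing with a left translation, so that $D'\circ\tilde\psi=\phi\circ D$ with $\phi$ smooth. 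What remains — and this is where the real work lies — is to deduce from this that $\tilde\psi$, and hence $\psi$, is smooth: a priori $\tilde\psi$ is only a homeomorphism compatible with the two locally trivial fibrations $D$ and $D'$, so upgrading it to a smooth map must use extra structure of the Fedida fibration beyond what has been used so far.
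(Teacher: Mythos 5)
Your treatment of the substantive content coincides with the paper's: the theorem is stated there as a direct consequence of Lemma \ref{lem.strong}, with the leaf spaces identified with the strongly homogeneous quotients $G/\Gamma$ and $G'/\Gamma'$ via Fedida's description, and the statement about base-like complexes and cohomologies read off from Theorem \ref{MAIN}; no further details are given in the paper. Your construction of $\varphi$ as a $C^0$-diffeological isomorphism via functoriality of quotient diffeologies, the application of Lemma \ref{lem.strong}, and the composite $p^*\circ\varphi^*\circ(p'^*)^{-1}$ are all correct and complete.

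The obstacle you flag --- the smoothness of $\psi$ itself --- is a genuine one, but it is a defect of the statement rather than of your argument, and you should not expect to close it from the stated hypotheses. Lemma \ref{lem.strong} controls only the transverse direction: it makes the base map $\phi\colon G\to G'$, hence $\varphi$, smooth, but a fibre-preserving homeomorphism $\tilde\psi\colon\tilde M\to\tilde M'$ over a smooth map of the bases can be arbitrarily irregular along the fibres, i.e.\ along the leaves. Concretely, if $\theta_t$ denotes the flow of a nonvanishing vector field tangent to $\FF$ and $f$ is a continuous but non-$C^1$ function with small oscillation, composing $\psi$ with the leaf-preserving homeomorphism $x\mapsto\theta_{f(x)}(x)$ changes nothing on the leaf spaces (so $\varphi$ is unchanged) yet destroys leafwise smoothness. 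What the argument actually yields is that $\psi$ is \emph{transversely} smooth, in the sense that the induced map of diffeological leaf spaces is a smooth isomorphism --- and that is all the ``in particular'' clause requires, since it depends only on $\varphi$. So your proof establishes everything that is both needed and provable here; the assertion that $\psi$ is smooth should be read as (or corrected to) transverse smoothness.
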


Now applying standard techniques from algebraic topology and using the structure theory of Molino \cite{MOLINO} one recovers the theorem of  El~Kacimi-Nicolau \cite{NICOLAUKACIMI}:

\begin{theorem}\label{APLI}- The base-like cohomology is a topological invariant for Riemannian foliations.
\end{theorem}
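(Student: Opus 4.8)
The plan is to reduce Theorem~\ref{APLI} to the already-established case of Lie foliations by means of Molino's structure theory, so that only standard homotopy-theoretic machinery needs to be added on top of the preceding theorem. Recall that for a Riemannian foliation $\FF$ on a compact manifold $M$ one has the Molino commuting sheaf and the lift of $\FF$ to the transverse orthonormal frame bundle $\hat M \to M$: the lifted foliation $\hat\FF$ is transversally parallelizable, its closures of leaves form a fibration $\hat M \to W$ onto the ``basic manifold'', and on each fibre the restricted foliation is a \emph{Lie foliation} whose global holonomy group is a dense subgroup of a Lie group depending only on $\FF$. The base-like cohomology $H^*_b(M,\FF)$ is computed, via $O(q)$-invariance, from $H^*_b(\hat M,\hat\FF)$, and the latter fits into a spectral sequence (the one used by El~Kacimi--Nicolau) whose $E_2$-term is built from the basic cohomology of the basic manifold $W$ with coefficients in the base-like cohomology of the Lie foliation on the fibres.

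The key steps, in order, are as follows. First I would show that a foliated homeomorphism $(M,\FF)\to(M',\FF')$ lifts to a foliated homeomorphism of the transverse frame bundles $(\hat M,\hat\FF)\to(\hat M',\hat\FF')$ that is $O(q)$-equivariant; this is the step where one must be slightly careful, because the frame bundle is built from the transverse metric, and a homeomorphism need not respect metrics. The way around this is the standard one: the leaf closures of $\hat\FF$, the fibration onto the basic manifold, and the commuting sheaf are all \emph{topological} invariants of the foliated space (they can be recovered from the dynamics of $\FF$, e.g.\ as closures of leaves and the structure of the leaf space), so a foliated homeomorphism induces a homeomorphism $W\to W'$ of basic manifolds covered by a foliated homeomorphism of the $\hat M$'s. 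Second, restricting to a fibre $F$ of $\hat M\to W$, the induced map is a foliated homeomorphism of minimal Lie foliations, so by the preceding theorem it induces an isomorphism $H^*_b(F,\hat\FF|_F)\xrightarrow{\ \cong\ } H^*_b(F',\hat\FF'|_{F'})$, compatibly as the fibre varies. Third, one feeds this into the El~Kacimi--Nicolau spectral sequence: the homeomorphism $W\to W'$ induces an isomorphism on the (ordinary, hence topologically invariant) cohomology of the basic manifolds, the fibrewise maps induce an isomorphism of the coefficient systems, and by a comparison-of-spectral-sequences argument the isomorphism propagates to $H^*_b(\hat M,\hat\FF)\cong H^*_b(\hat M',\hat\FF')$, and then descends by $O(q)$-invariance to $H^*_b(M,\FF)\cong H^*_b(M',\FF')$.

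The main obstacle I expect is the very first step: proving that a foliated \emph{homeomorphism} lifts to an $O(q)$-equivariant homeomorphism of transverse frame bundles, i.e.\ that the whole Molino tower is a topological invariant. One has to argue that the leaf closures, the basic manifold, and the structural Lie algebra of the commuting sheaf can be reconstructed purely from the topological dynamics of the foliated space, without reference to the transverse Riemannian metric used to define them. This is genuinely the crux of the El~Kacimi--Nicolau theorem, and in the present paper it is precisely the point that is being deferred to ``standard techniques from algebraic topology and the structure theory of Molino''; everything downstream of it (the spectral sequence comparison, the $O(q)$-averaging, the reduction to the Lie case) is routine once the invariance of the Molino data is in hand. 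A secondary, much milder obstacle is checking that the fibrewise isomorphisms from Theorem~\ref{MAIN} glue into an isomorphism of sheaves/coefficient systems over the basic manifold, which follows from the naturality of the constructions in Section~3 but should be stated carefully.
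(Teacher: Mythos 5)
Your outline follows exactly the route the paper intends: the paper offers no proof of Theorem~\ref{APLI} at all beyond the single sentence invoking ``standard techniques from algebraic topology'' and Molino's structure theory, so the whole reduction is deferred to \cite{MOLINO} and \cite{NICOLAUKACIMI}. Your sketch --- the Molino fibration $\hat M\to W$, dense Lie foliations on its fibres handled by the preceding theorem, a spectral-sequence comparison over the basic manifold, and $O(q)$-averaging --- is the standard implementation of that sentence, and you have correctly located the genuine crux, namely the topological invariance of the Molino data.

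One caveat on your first step as literally formulated: a foliated homeomorphism cannot be lifted to an $O(q)$-equivariant homeomorphism of the transverse orthonormal frame bundles, because it does not act on transverse frames at all (it need not be transversally differentiable, and even if it were it would not preserve the transverse metric). The way this is actually circumvented in \cite{NICOLAUKACIMI} is to work downstairs with the partition of $M$ into leaf closures --- which \emph{is} a topological invariant, being obtained by applying the closure operation to the topologically invariant partition into leaves --- and to run the \v{C}ech--De~Rham (Mayer--Vietoris) comparison over the space of leaf closures $M/\bar{\FF}=W/O(q)$, using the frame-bundle picture only to identify the local models near a leaf closure as (essentially) dense Lie foliations, to which the preceding theorem applies. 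So the ``workaround'' you propose is the right idea, but it should be stated as topological invariance of the leaf-closure decomposition and of the local transverse models, not of the frame bundle itself; with that reformulation your outline is a faithful account of the argument the paper is invoking, and is in fact more explicit than anything the paper provides.
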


{\small 
}
\bigskip

 {G.~Hector\\
 Institut C. Jordan - UMR CNRS 5028\\
 Math\'ematiques - Universit\'e Lyon 1\\
 43 Bd du 11 Novembre 1918\\
 69622 Villeurbanne-Cedex\\
 {\tt gilb.hector@gmail.com}\\

 E.~Mac\'{\i}as-Virg\'os\\
Department of Geometry and Topology,\\
University of Santiago de Compostela,\\
Avda. Lope de Marzoa s/n. Campus Sur. \\15782-Santiago de Compostela, Spain \\
{\tt quique.macias@usc.es\\
http://www.usc.es/imat/quique} \\

E.~Sanmart\'{\i}n-Carb\'on\\
Department of Mathematics,\\ University of Vigo,\\
F. CC. EE., \\
R\'ua Leonardo da Vinci, Campus Lagoas-Marcosende.
\\36310-Vigo, Spain\\
{\tt esanmart@uvigo.es}
}

\end{document}